\DeclareSymbolFont{cyrletters}{OT2}{wncyr}{m}{n}
\DeclareMathSymbol{\Sha}{\mathalpha}{cyrletters}{"58}
\numberwithin{equation}{section}
  \ifodd\value{page}\relax
\begin{document}

\newcommand\A{\mathbb{A}}
\newcommand\C{\mathbb{C}}
\newcommand\G{\mathbb{G}}
\newcommand\N{\mathbb{N}}
\newcommand\T{\mathbb{T}}
\newcommand\sO{\mathcal{O}}
\newcommand\sE{{\mathcal{E}}}
\newcommand\tE{{\mathbb{E}}}
\newcommand\sF{{\mathcal{F}}}
\newcommand\sG{{\mathcal{G}}}
\newcommand\sH{{\mathcal{H}}}
\newcommand\sN{{\mathcal{N}}}
\newcommand\GL{{\mathrm{GL}}}
\newcommand\HH{{\mathrm H}}
\newcommand\mM{{\mathrm M}}
\newcommand\fS{\mathfrak{S}}
\newcommand\fP{\mathfrak{P}}
\newcommand\fQ{\mathfrak{Q}}
\newcommand\Qbar{{\bar{\Q}}}
\newcommand\sQ{{\mathcal{Q}}}
\newcommand\sP{{\mathbb{P}}}
\newcommand{\Q}{\mathbb{Q}}
\newcommand{\tH}{\mathbb{H}}
\newcommand{\Z}{\mathbb{Z}}
\newcommand{\R}{\mathbb{R}}
\newcommand{\F}{\mathbb{F}}
\newcommand\cP{\mathcal{P}}
\newcommand\cQ{\mathcal{Q}}
\newcommand\Gal{{\mathrm {Gal}}}
\newcommand\SL{{\mathrm {SL}}}
\newcommand\Hom{{\mathrm {Hom}}}
\newtheorem{thm}{Theorem}
\newtheorem{theorem}[thm]{Theorem}
\newtheorem{cor}[thm]{Corollary}
\newtheorem{conj}[thm]{Conjecture}
\theoremstyle{proposition}
\newtheorem{prop}[thm]{Proposition}
\newtheorem{lemma}[thm]{Lemma}
\theoremstyle{definition}
\newtheorem{definition}[thm]{Definition}
\newtheorem{remark}[thm]{Remark}
\newtheorem{example}[thm]{Example}
\newtheorem{claim}[thm]{Claim}
\newtheorem{lem}[thm]{Lemma}
\theoremstyle{definition}
\newtheorem{dfn}{Definition}
\theoremstyle{remark}
\theoremstyle{remark}
\newtheorem*{fact}{Fact}
\author{ SriLakshmi Krishnamoorthy }
\address{ Srilakshmi Krishnamoorthy
 \newline
INDIAN INSTITUTE OF SCIENCE EDUCATION AND RESEARCH, THIRUVANANTHAPURAM, INDIA.}
\email{srilakshmi@iisertvm.ac.in}
\author{Sunil Kumar Pasupulati }
\address{ Sunil Kumar Pasupulati
\newline
\ \ INDIAN INSTITUTE OF SCIENCE EDUCATION AND RESEARCH, THIRUVANANTHAPURAM, INDIA.}
\email{sunil4960016@iisertvm.ac.in}

\title{
Note on the $p$-divisibility of class numbers of an infinite family of imaginary quadratic fields. }
\begin{abstract}
For any odd prime $p,$ we construct an infinite family  of  imaginary quadratic fields  whose class numbers are  divisible by $p$. We give a  corollary  
which  settles Iizuka's conjecture for the case
$n=1$ and $p >2.$
\end{abstract}
\subjclass[2010]{Primary: 11R29, Secondary: 11R11.}
\keywords{Class number, ideal class group, imaginary quadratic fields, Diophantine equation.}
\maketitle
\section{introduction}
Let $K$ be a number field. The ideal class group $Cl_K$ is defined to be the quotient group $J_K / P_K,$ where $J_K$ is the group of fractional ideals of $K$ and $P_K$ is  the group of principal fractional ideals of $K.$
It is well known that $Cl_K$ is finite. The class number $h_K$ of a number field $K$ is the order of $Cl_K.$ 
The ideal class group is one of the most basic and mysterious objects in algebraic number theory. The class group has drawn the attention of several authors. The divisibility properties of the class number of number fields play a significant role in understanding the structure of the ideal class groups of number fields. 
Cohen-Lenstra heuristics are a set of conjectures about this structure. In general, given a positive integer $n,$ there is no characterization to find all quadratic fields whose class numbers are divisible by $n.$ For $n=3,$ these fields were characterized in \cite{KM00}.
  For a given integer $n >1,$ the Cohen-Lenstra heuristic \cite{CL} predicts that the proportion of imaginary quadratic fields with class number divisible by $n$ should be positive. It has been proved by several authors that for every $n >1,$ there exist infinitely many  quadratic  fields  whose class number is  divisible by $n$  ( \cite{YK09}, \cite{IA11}, \cite{HC02}, \cite{CHKP}, \citep{GR},\cite{SL}).\\

 
 B. H. Gross and D. E. Rohrlich proved that for any odd integer $n >3,$ 
  there are infinitely many imaginary quadratic fields ($\Q\left(\sqrt{1−4U^n}\right), U>1$ ) whose class numbers are divisible by $n.$  Furthermore  St\'{e}phane Louboutin  \cite{SL}  proved the same result by simplifying the Gross and Rohrlich’s proof and proved the following result on divisibility of class number of $\Q(\sqrt{1-4U^k})$ for $U>2.$ 

 \begin{theorem}\label{T2}
If $k\in \Z^{+}$ be odd number, then for any integer $U  \geq 2$ the ideal class groups of the imaginary
quadratic fields $\Q(\sqrt{1 − 4U^k})$ contain an element of order $k.$ 
\end{theorem}
  Murty \cite{RM98} proved that  the class number of $\Q\left(\sqrt{1−U^n}\right)$ is divisible by $n$ if $1-U^n$ is square-free. We study the divisibility of class number  of  families $\Q(\sqrt{1-2m^p})$ by all odd primes $p.$\\
  
The following result on the $3$-divisibility of the class number is proved by K. Chakraborty and A. Hoque (Theorem 3.2, \cite{CH19} ).  
\begin{theorem} 
The class number of  $\mathbb{Q}(\sqrt{1-2m^3})$ is divisible by $3$ for any odd integer $m > 1.$
\end{theorem}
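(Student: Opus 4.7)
The plan is to produce an ideal $\mathfrak{A} \subset \sO_K$ of the imaginary quadratic field $K := \Q(\sqrt{d})$ with $d = 1 - 2m^3$ whose square has order exactly $3$ in the class group $Cl_K$. Two ramification facts set the stage cleanly: since $m$ is odd we have $d \equiv 3 \pmod 4$, which forces the squarefree part $d_0$ of $d$ to satisfy $d_0 \equiv 3 \pmod 4$, so $\sO_K = \Z[\sqrt{d_0}]$ and $2$ ramifies as $(2) = \mathfrak{p}_2^2$; and since $d \equiv 1 \pmod q$ for every odd prime $q \mid m$, each such $q$ splits as $(q) = \mathfrak{q}\bar{\mathfrak{q}}$ in $\sO_K$.

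Setting $\alpha := 1 + \sqrt{d}$, the norm identity
\[
N_{K/\Q}(\alpha) = (1+\sqrt{d})(1-\sqrt{d}) = 2m^3
\]
drives the whole argument. From $\alpha + \bar\alpha = 2$ and $\alpha - \bar\alpha = 2\sqrt{d}$, any prime dividing $\gcd((\alpha),(\bar\alpha))$ must lie over $2$, so for each odd $q \mid m$ the full $q$-adic valuation $3\,v_q(m) = v_q(N(\alpha))$ concentrates on exactly one of the two split primes above $q$. Packaging these contributions together yields an integral ideal $\mathfrak{A}$ of norm $m$ with
\[
(\alpha) = \mathfrak{p}_2\,\mathfrak{A}^3, \qquad \mathfrak{A}\,\bar{\mathfrak{A}} = (m).
\]
Because $\mathfrak{p}_2^2 = (2)$ is principal, $[\mathfrak{p}_2]^2 = 1$, and the first relation gives $[\mathfrak{A}^2]^3 = [\mathfrak{A}]^6 = [\mathfrak{p}_2]^{-2} = 1$ in $Cl_K$. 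Hence $[\mathfrak{A}^2]$ has order dividing $3$, and it suffices to prove that $\mathfrak{A}^2$ is \emph{not} principal.

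The hard part will be this last non-principality step. Suppose $\mathfrak{A}^2 = (\beta)$ with $\beta = u + v\sqrt{d_0} \in \sO_K$. Taking norms produces the Diophantine equation
\[
u^2 + |d_0|\,v^2 = m^2,
\]
and writing $d = d_0 f^2$ gives $|d_0| = (2m^3 - 1)/f^2$. For any solution with $v \neq 0$ one gets $f^2 \geq 2m$; when $d$ is squarefree ($f = 1$) this is immediate, and in general I would combine the size estimate $f^2 \geq 2m$ with the divisibility $f^2 \mid 2m^3 - 1$ to rule out the narrow window of admissible $f$. Once $v = 0$ is forced, $\beta = \pm m$ and therefore $\mathfrak{A}^2 = (m) = \mathfrak{A}\,\bar{\mathfrak{A}}$; cancellation then yields $\mathfrak{A} = \bar{\mathfrak{A}}$. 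But $\mathfrak{A}$ is a non-trivial product of split primes distinct from their conjugates (this is where the hypothesis $m > 1$ is used), so $\mathfrak{A} = \bar{\mathfrak{A}}$ is impossible. The contradiction pins the order of $[\mathfrak{A}^2]$ at exactly $3$, and hence $3 \mid h_K$.
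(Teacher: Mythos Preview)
First, a framing remark: the paper does not itself prove this statement --- it is quoted as Theorem~3.2 of Chakraborty--Hoque \cite{CH19}. What the paper \emph{does} prove is the generalization (its Theorem~\ref{main theorem1}) for $m=q^r$ a prime power and arbitrary odd prime $p$, under the side condition $K\neq\Q(i)$. That proof shares your setup exactly: $d\equiv 3\pmod 4$, $(2)=\mathfrak p_2^2$, $\alpha=1+\sqrt{1-2m^p}$, and the factorization $(\alpha)=\mathfrak p_2\,\mathfrak A^p$. The paper then works with $I:=\mathfrak p_2\mathfrak A$ rather than $\mathfrak A^2$; since $I^p=(2^{(p-1)/2}\alpha)$ is principal, non-principality of $I$ is reduced to showing $\pm 2^{(p-1)/2}\alpha$ is not a $p$-th power in $\sO_K$. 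Writing a putative $p$-th root as $a+b\sqrt{d_0}$ and comparing rational parts forces $a\mid 2^{(p-1)/2}$; the remaining cases are disposed of (for prime-power $m$) via the Bugeaud--Shorey bound on solutions of $Dx^2+1=2q^y$. For $p=3$ specifically this expansion is even simpler: $a(a^2+3b^2d_0)=\pm 2$ forces $a\in\{\pm1,\pm2\}$ and then $b^2|d_0|=1$, so the only obstruction is $d_0=-1$.

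Your route diverges at the non-principality step, and that is where the genuine gap lies. Assuming $\mathfrak A^2=(\beta)$ and passing to the norm equation $u^2+|d_0|v^2=m^2$ is clean when $1-2m^3$ is squarefree, since then $|d_0|=2m^3-1>m^2$ kills $v\neq 0$ immediately. But your handling of the non-squarefree case is only a hope, not an argument: the two constraints you isolate, $f^2\geq 2m$ and $f^2\mid 2m^3-1$, are \emph{not} mutually exclusive --- there is no size or congruence obstruction preventing $2m^3-1$ from having a square factor in the range $[2m,\,2m^3-1]$, and you offer no mechanism to rule this out or to exclude nontrivial $(u,v)$ in that regime. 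So as written the proof is incomplete for general odd $m>1$. If you want an elementary finish for $p=3$, it is far more efficient to follow the paper's choice of $I=\mathfrak p_2\mathfrak A$ and expand $\beta^3=\pm 2\alpha$ directly: as noted above this forces $|d_0|=1$ in two lines, with no squarefreeness hypothesis, leaving only the (separate) task of excluding $K=\Q(i)$.
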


We study the family $\mathbb{Q}(\sqrt{1-2m^p})$ for all odd primes $p$ and prime power $m=q^r, r \in \mathbb{N}.$ In the following theorem, we prove the $p$-divisibility of class numbers for this family by using the results of Yann Bugeaud and T. N. Shorey   \cite{SB}. 
\begin{theorem}\label{main theorem1}
For  prime numbers $p ,q \geq 3$ and $m = q^r, r \in \mathbb{N}$ such that $\mathbb{Q}(\sqrt{1-2m^p}) \neq \mathbb{Q}(\sqrt{-1}),$
the class number of  $\Q(\sqrt{1-2m^p})$  is divisible by $p.$
\end{theorem}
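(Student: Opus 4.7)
The plan is to identify an ideal of $\mathcal{O}_K$ whose class in $Cl_K$ has order exactly $p$, generalizing the Chakraborty-Hoque strategy for $p=3$. Set $d = 1 - 2m^p$, $K = \Q(\sqrt{d})$, and write $d = d_0 f^2$ with $d_0$ squarefree. Since $m = q^r$ is odd, $d \equiv -1 \equiv 3 \pmod 4$; as $d$ is odd $f$ must be odd, forcing $d_0 \equiv 3 \pmod 4$ as well. Thus $\mathcal{O}_K = \Z[\sqrt{d_0}]$, the prime $2$ ramifies as $(2) = \mathfrak{p}_2^2$, and a size estimate on $|d_0| f^2 = 2m^p - 1$ rules out $q \mid f$. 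Combined with $d \equiv 1 \pmod q$, this shows $d_0$ is a nonzero square modulo $q$, so $q$ splits: $(q) = \mathfrak{q}\bar{\mathfrak{q}}$.

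The element $\alpha = 1 + \sqrt d$ has norm $1 - d = 2 m^p = 2 q^{rp}$. A residue calculation modulo $\mathfrak{p}_2$, using $f$ odd, yields $\mathfrak{p}_2 \mid (\alpha)$; meanwhile $q \nmid \alpha$ prevents both $\mathfrak{q}$ and $\bar{\mathfrak{q}}$ from simultaneously dividing $(\alpha)$. After labelling, $(\alpha) = \mathfrak{p}_2 \, \mathfrak{q}^{rp}$. Define $\beta := \alpha^2/2 = (1-m^p) + \sqrt d \in \mathcal{O}_K$; squaring the factorization of $(\alpha)$ and cancelling $(2) = \mathfrak{p}_2^2$ gives $(\beta) = \mathfrak{q}^{2rp} = (\mathfrak{q}^{2r})^p$. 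Consequently $[\mathfrak{q}^{2r}]$ has order dividing $p$ in $Cl_K$, and the theorem will follow once this class is shown to be non-trivial.

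Suppose for contradiction that $\mathfrak{q}^{2r} = (\gamma)$ with $\gamma = a + b\sqrt{d_0}$. Then $\gcd(a,b) = 1$ (forced by $\mathfrak{q} \neq \bar{\mathfrak{q}}$, since any common rational factor would bring in $\bar{\mathfrak{q}}$), $b \neq 0$, and $a^2 + |d_0| b^2 = m^2$. Comparing with $(\gamma^p) = (\beta)$ gives $\gamma^p = u\beta$ for some $u \in \mathcal{O}_K^\times$; the hypothesis $K \neq \Q(i)$, together with a direct check ruling out the exceptional case $K = \Q(\sqrt{-3})$, forces $u = \pm 1$. Expanding $(a + b\sqrt{d_0})^p = \pm((1-m^p) + f\sqrt{d_0})$ by the binomial theorem and matching the rational and $\sqrt{d_0}$-coefficients produces a Lebesgue-Nagell type Diophantine system in the parameters $(a,b,d_0,f,m,p)$, together with the norm constraint.

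The principal obstacle is this final step: the Diophantine system is of mixed character, with $d_0$ and $f$ themselves depending on $(m,p)$. The plan is to invoke the estimates of Bugeaud and Shorey from \cite{SB} uniformly, to conclude that no integer solution to this system exists for $p \geq 3$ prime (outside the excluded $K = \Q(i)$ case, which corresponds to an exceptional small solution). This contradiction forces $[\mathfrak{q}^{2r}]$ to have order exactly $p$, whence $p \mid h_K$.
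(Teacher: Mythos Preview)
Your strategy is the paper's strategy: factor $(\alpha)$, locate an ideal whose $p$-th power is principal, and rule out principality via Bugeaud--Shorey. The factorization $(\alpha)=\mathfrak{p}_2\mathfrak{q}^{rp}$ and the passage to an ideal class of order dividing $p$ are all correct (and incidentally $d_0\equiv 3\pmod 4$ already forces $d_0\neq -3$, so the unit group is $\{\pm1\}$ with no separate check needed). The gap is precisely where you flag it, and it is not merely technical: your choice of generator $\beta=\alpha^2/2$ makes the Bugeaud--Shorey step fail to go through.

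The point is this. Bugeaud--Shorey (Theorem~\ref{TT2}, and its consequence Lemma~\ref{TL2}) bounds the number of solutions $(x,y)$ to $D_1x^2+D_2=\lambda^2 k^y$ with $D_1,D_2$ \emph{fixed}. From $\mathfrak{q}^{2r}=(\gamma)$ you obtain $a^2+|d_0|b^2=q^{2r}$; here the ``constant term'' $a^2$ is not fixed, so this is not a second solution to anything, and the coefficient comparison from $\gamma^p=\pm\beta$ gives rational part $\pm(1-m^p)$, which imposes no visible divisibility constraint on $a$. The paper avoids this by working with $I=\mathfrak{p}_2\mathfrak{q}^r$ (your $\mathfrak{q}^{2r}$ is exactly $[I]^2$, so the two classes coincide up to an odd power). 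Then $I^p=(2^{(p-1)/2}\alpha)$, and if $I=(\delta)$ with $\delta=a+b\sqrt{d_0}$, the rational part of $\delta^p$ must equal $\pm 2^{(p-1)/2}$; hence $a\mid 2^{(p-1)/2}$, and a short parity argument forces $a=\pm1$. The norm equation then reads $1+|d_0|b^2=2q^r$, which \emph{is} a second solution to $|d_0|x^2+1=2q^y$ alongside the tautological $(f,rp)$ coming from $1-2m^p=d_0f^2$. Lemma~\ref{TL2} says there is at most one, and that is the contradiction.

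In short, squaring to clear $\mathfrak{p}_2$ is exactly what destroys the argument: it replaces the rational part $2^{(p-1)/2}$ by $1-m^p$, and with it the crucial constraint $a=\pm1$. If you instead take $I=\mathfrak{p}_2\mathfrak{q}^r$ and analyse $2^{(p-1)/2}\alpha$ (this is Proposition~\ref{prop1}), your outline becomes a complete proof.
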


 The condition $\mathbb{Q}(\sqrt{1-2m^p}) \neq \mathbb{Q}(\sqrt{-1}),$  means that $2m^p-1$ is not a square. Siegel's theorem (Lemma \ref{st}) asserts that there are only finitely many $m\in \Z$  such that $2m^p-1$ is a square.\\
\\
The  Birch Swinnerton-Dyer conjecture is an elliptic curve analogue of the analytic class number formula.
For any elliptic curve defined over $\Q$ of rank zero and square-free conductor $N,$ if $p \mid | E(\Q)| ,$ under
certain conditions on the Shafarevich-Tate group $ {\Sha}_D,$ the first author   \cite{S16} showed that 
$p \mid |\Sha_D|$ if and only if   $p \mid h_K,$  where $K=\Q(\sqrt{-D}).$\\



\section{Iizuka's Conjecture} 
Y.Iizuka recently proves the following result on divisibility of the class numbers of imaginary quadratic fields in \cite{IY}.
\begin{theorem}\label{I}
There is an infinite family of pairs of imaginary quadratic fields
$\Q(\sqrt{d})\ and \  \Q(\sqrt{d+1})$ with $d\in \Z$ whose class numbers are both divisible by $3.$
\end{theorem}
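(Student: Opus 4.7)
The plan is to construct an infinite sequence of negative integers $d$ such that both $\Q(\sqrt{d})$ and $\Q(\sqrt{d+1})$ carry a non-trivial ideal class of order $3$. My basic tool is the Gross--Rohrlich / Scholz-type criterion already implicit in Theorem~1: for squarefree $D < 0$, if coprime integers $x, u$ satisfy $x^{2} - D = 4 u^{3}$ with some prime factor of $u$ exceeding the Minkowski bound of $\Q(\sqrt{D})$, then the ideal $\mathfrak{a} = \bigl(u,\,(x+\sqrt{D})/2\bigr)$ is non-principal but $\mathfrak{a}^{3}$ is principal, so it contributes an element of order $3$ to the class group.

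First, I would look for integer polynomials $X(t), Y(t), U(t), V(t), d(t) \in \Z[t]$ realising the simultaneous identities
\begin{equation*}
X(t)^{2} - d(t) \,=\, 4\, U(t)^{3}, \qquad Y(t)^{2} - \bigl(d(t) + 1\bigr) \,=\, 4\, V(t)^{3}.
\end{equation*}
Subtracting kills $d(t)$ and reduces the entire problem to the single polynomial relation $Y(t)^{2} - X(t)^{2} - 1 = 4 \bigl(V(t)^{3} - U(t)^{3}\bigr)$, which I would attack by a low-degree ansatz. For instance, setting $V = U + c$ for a constant $c$ factors the right-hand side as $4c\bigl(3 U^{2} + 3 c U + c^{2}\bigr)$, after which one tries to split the factorisation $(Y-X)(Y+X) = 1 + 4c\bigl(3U^{2}+3cU+c^{2}\bigr)$ across two linear factors in $U$. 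Any non-trivial solution defines the candidate family $d(t) := X(t)^{2} - 4 U(t)^{3}$.

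Second, once an explicit family is in hand, for every sufficiently large integer $t$ I would verify: (i) $d(t) < 0$, forced by the leading coefficient of $X(t)^{2} - 4 U(t)^{3}$ once $\deg U > \deg X$; (ii) the squarefree kernels of $d(t)$ and $d(t) + 1$ are distinct and define two genuinely imaginary quadratic fields, which follows from $\gcd\bigl(d(t), d(t) + 1\bigr) = 1$; (iii) both $U(t)$ and $V(t)$ admit a prime factor exceeding the Minkowski bound of the corresponding field, ensuring the ideal classes built above have exact order $3$; (iv) the map $t \mapsto d(t)$ has infinite image, producing infinitely many distinct pairs. Squarefreeness of the radicands for a positive density of $t$ follows from Hilbert irreducibility applied to the squarefree parts of $d(t)$ and $d(t)+1$.

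The main obstacle, by a wide margin, is the first step. Requiring $d$ and $d+1$ simultaneously to be of the shape ``integer square minus four times a cube'' is an integral point problem on a surface in $\mathbb{A}^{4}$, and the additive constraint linking the two cubes is rigid enough that no low-degree parametrisation obviously presents itself. In practice I expect to proceed either by specialising a rational point of infinite order on an elliptic fibration lying on this surface, in the spirit of \cite{YYN}, or by a computer search followed by hand verification of the resulting polynomial identity. Once such an identity is secured, the remaining verifications are routine and the asserted infinite family of pairs drops out.
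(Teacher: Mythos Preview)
Your proposal is a faithful sketch of Iizuka's original strategy, but as you yourself acknowledge, the decisive first step---exhibiting an explicit polynomial family $(X,Y,U,V,d)$ satisfying both identities---is left open. Without that parametrisation in hand the remaining verifications never get off the ground, so what you have written is a plan rather than a proof.

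The paper bypasses this search entirely by a different construction, and in particular does \emph{not} try to force both $d$ and $d+1$ into the same Gross--Rohrlich shape $X^{2}-4U^{3}$. Instead it sets
\[
d \;=\; 4\,(1-2m^{3})^{3}, \qquad m \ \text{an odd prime power}.
\]
Since $4(1-2m^{3})^{3} = \bigl(2(1-2m^{3})\bigr)^{2}\cdot(1-2m^{3})$, one has $\Q(\sqrt{d}) = \Q(\sqrt{1-2m^{3}})$, and the $3$-divisibility of this class number is supplied by Theorem~\ref{main theorem1} (the Chakraborty--Hoque / Bugeaud--Shorey argument), \emph{not} by a representation $X^{2}-d=4U^{3}$. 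On the other side,
\[
d+1 \;=\; 4(1-2m^{3})^{3}+1 \;=\; 1-4\,(2m^{3}-1)^{3},
\]
which is already of Louboutin's form $1-4U^{3}$ with $U=2m^{3}-1\ge 2$, so Lemma~\ref{T2} handles $\Q(\sqrt{d+1})$. Infinitude of the resulting fields is then a direct consequence of Siegel's theorem (Lemma~\ref{st}).

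The conceptual difference is this: you insist on a single divisibility criterion applied to both members of the pair, which is precisely what makes the simultaneous parametrisation rigid. The paper instead uses two \emph{different} criteria---one for $\Q(\sqrt{1-2m^{3}})$ coming from the Diophantine input of Bugeaud--Shorey, and Louboutin's for $\Q(\sqrt{1-4U^{3}})$---and links them through the elementary observation that multiplying $1-2m^{3}$ by the square $4(1-2m^{3})^{2}$ shifts it to within $1$ of a number of the form $1-4U^{3}$. This removes the need for any search, computer-assisted or otherwise.
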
 
Based on the above theorem, Iizuka conjectured  the  following
\begin{conj}\label{conjecture}(Iizuka)
For any prime $p$ and any positive integer $n,$ there is an infinite family of $n+1$ successive real (or imaginary)
quadratic fields 
$$\Q(\sqrt{D}), \Q(\sqrt{D+1}), \cdots, \Q(\sqrt{D+n})$$ with $D \in \Z$ whose class numbers are divisible
by $p.$
\end{conj}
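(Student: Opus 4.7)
The plan is to split the conjecture into the case $n=1$, which is accessible with the machinery already developed in the paper, and the open range $n \geq 2$, where I will sketch a strategy and flag the main obstacle. For $n=1$, I would compose \Cref{main theorem1} with Louboutin's \Cref{T2}. Take $m = q^r$ in the infinite set of odd prime powers supplied by \Cref{main theorem1}, and set $D := 4(1 - 2m^p)^p$, so that $\Q(\sqrt{D}) = \Q(\sqrt{1 - 2m^p})$ has class number divisible by $p$. The key algebraic observation is the identity
\[
D + 1 \;=\; 4(1 - 2m^p)^p + 1 \;=\; 1 - 4(2m^p - 1)^p \;=\; 1 - 4 U^p,
\]
with $U := 2m^p - 1 \geq 2$, using that $p$ is odd so that $(1 - 2m^p)^p = -(2m^p - 1)^p$. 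Then \Cref{T2} produces an element of order $p$ in the class group of $\Q(\sqrt{D+1})$. Infiniteness of the resulting family of pairs $(\Q(\sqrt{D}), \Q(\sqrt{D+1}))$ is secured by applying Siegel's theorem (\Cref{st}) to the curve $y^2 = (1 - 2x^p)/d_0$ for each fixed squarefree $d_0$, which rules out infinitely many values of $m$ collapsing to the same quadratic field.

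For the open range $n \geq 2$, the natural generalization is to look for polynomial parametrizations
\[
D + i \;=\; c_i\, G_i(t)^p + \varepsilon_i, \qquad i = 0, 1, \ldots, n,
\]
where each shape on the right is known to force a $p$-divisible class number via some analogue of \Cref{main theorem1} or \Cref{T2}. The success of the $n=1$ construction rests on the lucky identity $(2m^p - 1)^p = -(1 - 2m^p)^p$, which fuses two such shapes into a single free parameter $m$. I would first try to iterate this by hunting for substitutions $m = h(t)$ making $D + 2$ also factor as $c_2 h_2(t)^p + \varepsilon_2$, or by chaining several independent Diophantine families (Gross--Rohrlich, Louboutin, Bugeaud--Shorey) through a common variable.

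The main obstacle I anticipate is the rigidity of consecutive integer shifts: imposing $n+1$ simultaneous ``$p$-th-power'' conditions on $D+i$ typically carves out an algebraic curve of high genus, to which Faltings' theorem applies, yielding only finitely many rational points. Hence no single one-parameter polynomial identity can produce an infinite family once $n \geq 2$. Overcoming this will likely require either a genuinely new algebraic identity linking three or more consecutive shifts, or a switch to less constructive inputs such as Cohen--Lenstra style density arguments or rational points on elliptic curves of positive rank, in the spirit of the work of Iizuka, Konomi, and Nakano. I therefore expect that the conjecture for arbitrary $n$ and $p$ lies beyond the Diophantine techniques used in this paper.
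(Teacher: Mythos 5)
Your proposal is correct as far as it goes and, for the only case anyone can currently prove, it takes essentially the same route as the paper: the $n=1$, $p$ odd case is exactly the paper's proof of \Cref{main theorem2}, combining \Cref{main theorem1} with Louboutin's \Cref{T2} via the identity $4(1-2m^p)^p+1=1-4(2m^p-1)^p$ and using Siegel's theorem (\Cref{st}) for infinitude. The statement itself is a conjecture that the paper does not claim to prove beyond $n=1$ and $p>2$, so your honest assessment that the range $n\geq 2$ (and $p=2$) lies beyond these Diophantine techniques is the right conclusion, not a gap in your argument.
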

As a consequence of \Cref{T2} and \Cref{main theorem1}, we get a  generalization of   \Cref{I} for all odd prime numbers $p$ and prove the following corollary.
\begin{cor}\label{main theorem2}
For every odd  prime number  there is an infinite family of pairs of imaginary quadratic fields
$\Q(\sqrt{d})\ and \  \Q(\sqrt{d+1})$ with $d\in \Z$ whose class numbers are both divisible by $p.$
\end{cor}

{\bf{\it Proof of the Corollary \ref{main theorem2}.}}\\
Fix an odd prime $p.$
Consider the set
$$S_0 = \Big\{
 m \in \Z^+|\ \text{the class number of } \Q \left( \sqrt{1-2m^p}  \right)\ \text{is divisible by } p \Big\}.$$  By  Lemma \ref{st},
   the  equation  $1-2x^p  = -y^2$ has finitely many solutions $(x,y)\in \Z\times \Z$. 
   Hence it follows from \Cref{main theorem1} that  $S_0$  contains infinitely many odd prime powers, which implies that $S_0$ is an infinite set.
For $m\in S_0,$ the prime $p$ divides the class number of $$ \Q(\sqrt{4(1-2m^p)^p})=\Q(\sqrt{1-2m^p}).$$ 
Let $U=2m^p-1.$  Then $U\geq 2.$ Furthermore \Cref{T2} implies that $p$ divides the class number of $\Q(\sqrt{1-4U^p}).$ Now look at 

 $$\Q(\sqrt{1-4U^p})=\Q\left(\sqrt{1-4(2q^p-1)^p} \right)=\Q\left(\sqrt{4(1-2q^p)^p+1}\right).$$  Let $d=4(1-2m^p)^p.$ The prime $p$ divides class numbers of
 $\Q(\sqrt{d}),\Q(\sqrt{d+1}).$

 Now to conclude the corollary,  we need to prove the set 
  $\mathcal{A}=\Big\{\Bbb Q \left( \sqrt{1-2m^p} \right)| m \in S_0\Big\}$ is an infinite set. For every square-free  integer $d_0 \neq 0$,  let $f(x)= \frac{1-2x^p}{d_0}.$ The polynomial $f(x)$ has distinct roots in $\overline{\Q}.$ Thus by Lemma \ref{st}, the equation $y^2=f(x)$ has finitely many integral solutions. Hence  the infiniteness of $\mathcal{A}$  follows from that of $S_0$. 
\begin{remark}
The above corollary settles Iizuka's conjecture (\ref{conjecture}) for the case $n=1$ and $p >2.$ We found  a  similar result for a different families of imaginary quadratic fields in \cite{XC}.
J. Chattopadhyay and S. Muthukrishnan  \cite{JS19} answer a weaker version of Iizuka's conjecture for $p=3.$  
\end{remark}

\section{Preliminaries}
We recall some known results and prove some lemmas that are necessary for proving our main theorem.
\begin{definition}
Let $K$ be  a number field  and let $S$ be a finite set of valuations on $K$, containing all the archimedean valuations. Then $$ R_S=\{ \alpha \in K \ | \ \nu(\alpha) ≥ 0  \ for \ all \ \nu \not \in S\}$$
is called the set of $S$-integers.
\end{definition}
\begin{lemma}\label{st} ( Siegel's theorem, \cite{SJ}, Chapter IX, Theorem 4.3 ) Let $K$ be a number field and  S be a finite set of valuations on $K$, containing all the archimedean valuations. Let $f(X)\in K[X]$ be a polynomial of degree $d \geq 3$ with distinct roots in the algebraic closure $\overline{K}$ of K. Then the
equation $y^2 = f(x)$ has only finitely many solutions in $S$-integers $x,y \in R_S.$
\end{lemma}

 We recall some results of Yann Bugeaud and  T.N. Shorey  \cite{SB} on solutions of Diophantine equation  $D_1x^2+D_2=\lambda^2 k^y,$ where $D_1$ and $D_2$ are coprime positive integers, $k\geq 2$ is an integer coprime with $D_1D_2$ and $\lambda =\sqrt{2},2$  such that $\lambda =2$ if $k$ is even.

Let us denote $F_i$ to be the Fibonacci sequence defined by $F_0=0,F_1=1$ and  $F_{i}=F_{i-1}+F_{i-2}$ for all $i\geq 2.$ Let $L_i$ be the Lucas sequence defined by $L_0=2,L_1=1$ and satisfying $L_i=L_{i-1}+L_{i-2}$ for all $i\geq 2.$ Define the subsets $\sF, \sG, \sH$ of $\N \times \N \times \N$  by $$ \sF:=\bigr\lbrace (F_{i-2\epsilon},L_{i+\epsilon},F_i)\  |\  i\geq 2, \epsilon \in \{\pm1\}\bigr\rbrace,$$
$$\sG:= \bigr\lbrace (1,4k^r-1,k)\  |\  k\geq 2, r\geq 1 \bigr\rbrace, $$
\begin{align*}
\sH :=\bigr\lbrace (D_1,D_2,k)\  |\ \text{there exist positive integers $r$ and $s$ such that } \\ D_1s^2+D_2=\lambda^2k^r \ and\ 3D_1 s^2-D_2=\pm \lambda^2  \bigr\rbrace .
\end{align*}

Define $\sN (\lambda,D_1,D_2,p)$ to be the number of $(x,y)\in \mathbb{Z}^{+}\times \mathbb{Z}^{+}$ of the Diophantine equation  $D_1x^2+D_2= \lambda^2 p^y.$

\begin{theorem}\label{shorey}(\cite{SB}, Theorem 1)
Let p be a prime number. Then    we have $\sN (\lambda,D_1,D_2,p)\leq 1$ expect for $\sN(2  ,13,3,2) =\sN( \sqrt{2} , 7 , 11 , 3) = \sN (1 , 2 , 1 , 3) = \sN (2 , 7 , 1 , 2) = \sN ( \sqrt{2},1,1 ,5) = \sN ( \sqrt{2} , 1 , 1 , 13) = \sN (2 , 1 , 3 , 7) = 2$ and when $( D_1,D_2,p)$ belongs to one of the infinite families $\sF , \sG \ and \  \sH .$ 
\end{theorem}

\begin{lemma}\label{TL2}
 For any odd prime $q$ and any integer $D > 3,$  the equation $Dx^2+1=2q^y$ has at most one solution $(x,y)\in \mathbb{Z}^{+}\times \mathbb{Z}^{+}.$ 

\end{lemma}
\begin{proof}
Let $D_1=D$ and $D_2=1$ and $\lambda =\sqrt{2}$. We first note that $(\lambda,D_1,D_2,q) \not \in  \{(2,13,3,2),( \sqrt{2},7,11,3),$ 
$(1 , 2 , 1 , 3),(2 , 7 , 1 , 2),  ( \sqrt{2} , 1 , 1 , 5),  ( \sqrt{2} , 1 , 1 , 13), (2 , 1 , 3 , 7) \}. $ By Theorem \ref{shorey}, it is enough to show that   $(D_1,D_2,q) \not \in \sF.$ If  $(F_{i-\epsilon}, L_{i+\epsilon}, F_i) = (D_1, D_2, p),$ then  $i=2, \epsilon =-1$. Hence  $D=2.$ This is not possible because $D>3.$ Therefore $(D_1,D_2,q) \not \in  \sF.$ If $(1,4k^r-1,k) = (D_1, D_2, q)$, then $D=D_1 = 1.$ This is not possible. Therefore $(D_1,D_2,q) \not \in  \sG.$ If $D_1x^2 + 1 = 2q^y,$ then $3D_1 x^2 -1 \geq D_1 x^2 -1 \geq 2 q^y -2 \geq 4.$ Hence $(D_1,D_2,q) \not \in  \sH.$

\end{proof}
\begin{prop}\label{prop1}
 For  prime numbers $p ,q \geq 3$ and $m = q^r, r \in \mathbb{N},$ let $\alpha = 1+\sqrt{1-2m^p}, $ then $\pm2^\frac{p-1}{2} \alpha$ is not a  $p$-th power of an algebraic integer in $\Q(\sqrt{1-2m^p})$. 
\end{prop}
\begin{proof}
Let $d$ be the square-free   part of $\sqrt{1-2m^p}$ with signature.
 Let $K = \Q(\sqrt{d})$ and $\mathcal{O}_K$ be the ring of integers of $K.$
  Note that $-2^{\frac{p-1}{2}}\alpha$ is a $p$-th power in $\mathcal{O}_K$ if and only if
   $2^{\frac{p-1}{2}}\alpha$ is a $p$-th power in $\mathcal{O}_K.$ It is enough to show $2^{\frac{p-1}{2}}\alpha$ is not a $p$-th power. Suppose  that,
 \begin{align}\label{p1}
 2^{\frac{p-1}{2}}\alpha= \beta^p \  \mathrm{for} \ \mathrm{some} \ \beta = a+b\sqrt{d} \in  \mathcal{O}_K.
 \end{align}
  Then ,
 \begin{align} \label{p2}
 2^{\frac{p-1}{2}}\left( 1+\sqrt{1-2m^p}\right)=\sum_{j=0}^{\frac{p-1}{2}}{ p \choose 2j} a^{p-2j} b^{2j}d^j+\gamma \sqrt{d}  \  \mathrm{for} \ \mathrm{some}  \ \gamma \in \mathbb{Z}.
\end{align} 
By comparing constant terms on both sides, we have ,
\begin{align} \label{p3}
2^{\frac{p-1}{2}} =  \sum_{j=0}^{\frac{p-1}{2}}{ p \choose 2j} a^{p-2j} b^{2j}d^j.
\end{align} 
This implies that,
\begin{align*}
2^{\frac{p-1}{2}} =  a \Biggr(\sum_{j=0}^{\frac{p-1}{2}}{ p \choose 2j} a^{p-2j-1} b^{2j}d^j\Biggr).
\end{align*} 
Hence $a$ divides ${2^{\frac{p-1}{2}}}.$\\

{\textbf Case 1 :} $a$ is even.\\
We look at (\ref{p1})
\begin{align}\label{p5}
 {2^{\frac{p-1}{2}}}\alpha=\left(a+b\sqrt{d}\right)^p,
\end{align} 
applying the norm map on the both sides 
   $$(2m)^p= \left(a^2-b^2d\right)^p.$$ 
Hence we have  $$2m= a^2-b^2d.$$ 
Since $2\mid a$,  we obtain $2\mid b^2d.$ We deduce that  $2\mid b$ since $d$ is odd. Taking  divisibility of $a^2, b^2$ by $4$  into consideration, we conclude  that $4\mid {2m} $ but $m$ is odd which contradicts the assumption that  $a$ is even. \\
{\textbf Case 2 :} $a$ is odd. \\
Suppose that $a$ is odd. Since $a\mid {2^{\frac{p-1}{2}}}$, this implies that $a=\pm 1$.
Putting in \Cref{p1} we have 
$$2^{\frac{p-1}{2}}( 1+\sqrt{1-2m^p)}= \left(\pm 1+b\sqrt{d}\right)^p,$$ 
applying the norm map on both sides we get  
$$(2m)^p = {\left(1-b^2d\right)^p}.$$ 
Rewriting the above equation using $D=-d$ and $m=q^r$, we get
\begin{align}\label{p6}
1+Db^2=2q^r.
\end{align} 
We observe that  $1-2m^p=1-2(q^r)^p=b'^2d$ for some $b'\in \Z.$ Rephrasing this equation we have,
\begin{align}\label{p7}
1+b'^2D=2q^{rp}.
\end{align} 
Thus, from Equations \eqref{p6} and \eqref{p7}, we get $(b,r),(b',rp)$ are solutions of the equation \\
$Dx^2+1=q^y,$
which is a contradiction to \Cref{TL2}. Hence $\pm2^\frac{p-1}{2} \alpha$ is not a $p$-th power in $\mathcal{O}_K.$

\end{proof}

\section{Proof of the theorem}
 We now prove the main theorem of this article.\\
{\it Proof of \Cref{main theorem1}}.
 Let $d$ be the  square-free part of  $ 1 - 2m^p$ with signature then $d \equiv 3 \pmod 4$ and $K = \Q(\sqrt{d}).$
Put $\alpha:=1+\sqrt{1-2m^p}$, then $N_{K/\Q}(\alpha)=2m^p.$  Since  $d\equiv 3 \pmod 4$,  the ideal $(2)$ is ramified,  there exists a prime ideal  $\cP$ such that  $(2)=\cP^2.$ 
Since the norm of $\alpha $  is $2m^p=2q^{rp},$   the prime decomposition of $(\alpha)$ is given by 
$(\alpha)=\cP\cQ^t,$ for some positive integer $t,$  where $\cQ$ is a prime which lies above $q.$ 
Then $N_{K/\Q}((\alpha))=2 q^t$, where $N(\cQ)=q$ (since $q$ splits in $\Q(\sqrt{d})$ as $\left(\frac{d}{q}\right)=1).$ Hence $ t = rp.$\\
Consider the ideal  $I:=\cP\cQ^{\frac{t}{p}},$  of $K.$ Observe that
$$I^p= \cP^p\cQ^t= (2)^{\frac{p-1}{2}}\cP\cQ^t =(2)^{\frac{p-1}{2}} (\alpha)=(2^{\frac{p-1}{2}} \alpha).$$ 
We claim that the  order of the ideal $I$ in ideal class group is $p.$ Suppose not, let $(\beta)=I$ for some $\beta$ in $\mathcal{O}_K.$  Then 
$$(\beta^p)=(\beta)^p= I^p = (2^{\frac{p-1}{2}} \alpha).$$
Since the only units of $\sO_K$ are $\{ 1,-1\},$ this implies that  $I$  is  a principal ideal if and only if  $\pm 2^\frac{p-1}{2} \alpha$  is a power of $p$ in $\sO_K .$  From Proposition \ref{prop1}, we know that  $\pm 2^\frac{p-1}{2} \alpha$ is not a $p$-th power in $\sO_K .$  Hence the class group of $\Q(\sqrt{1-2m^p}) $  has an  element $I$ of order $p.$  \hfill $\square$

We prove a corollary of Theorem \ref{main theorem1}.
\begin{cor}

For every odd prime $p$, there exist infinitely many imaginary biquadratic fields whose class number is divisible by $p$. 

\end{cor}
\begin{proof}
Fix an odd prime $p$. Consider the set 
\begin{align*}
S_1=\{ m\in \Z^+|\ m \ \text{ is not a square,}\ & m\equiv 1\pmod 4 \ \text{and } \\ &\text{the class number of}\ \Q(\sqrt{1-2m^p})\ \text{is divisible by }p\}.
\end{align*} 

By Lemma \ref{st}, the equation $1-2x^p=y^2$ has only finitely many solutions $(x,y) \in \Z\times \Z.$ Hence it follows from Theorem 3 and Dirichlet's theorem on arithmetic progression that $S_1$ contains infinitely many primes $q$ with $q\equiv 1\pmod{4},$ which implies that $S_1$ is an infinite set.

For $m \in S_1$, consider the imaginary biquadratic field $K_m=\Q(\sqrt{1-2m^p},\sqrt{m}).$ 
 Denote $L_{m}^1 :=\Q(\sqrt{1-2m^p}), L^2_{m} :=\Q(\sqrt{m})$ and $L_{m}^3:=\Q\left(\sqrt{1-2m^p} \sqrt{m}\right).$
  Since $m$ is not a square, $L^2_m$ is actually a quadratic field. We observe that $L_m^1\neq L_m^2$  because $1-2m^p\equiv 3 \pmod{4}.$ Thus $L_{m}^1,  L^2_{m}$  and $L_{m}^3$ are the three quadratic subfields of $K_m.$
 Let $h_{m}, h_{m}^1, h_{m}^2$ and  $h_{m}^3$ be the class numbers of $K_{m}, L_{m}^1, L_{m}^2$ and $L_{m}^3$ respectively. Then by Lemma 2 in \cite{GC91}, we have $h_{m}= \frac{h_{m}^1 h_{m}^2 h_{m}^3} {2^i}$  where $i= 0,1.$ 
 Since $m \in S_1,$ the prime $p$ divides $h_{m}^1.$ Since $p$ is odd, $p$ divides $h_m.$ The infiniteness of the set $\{ K_m| m\in S_1\}$ follows from that of the set  $S_1.$

\end{proof}

\section*{Acknowledgements}
We thank Prof. Kalyan Chakraborty for introducing this problem to us. We thank IISER Tiruvananthapuram for providing excellent working conditions. We thank  Azizul Hoque for his comments and the careful reading of this article. We acknowledge SageMath for the numerical evidence of our theorems. We like to express our deep gratitude to the anonymous referee for carefully analyzing the article,  refining several arguments and making suggestions for a better presentation.
\bibliographystyle{amsplain}
\bibliography{p-divisibility-class-numbers-updated.bib}
\end{document}